\newtheorem{theorem}{Theorem}
\theoremstyle{plain}
\newtheorem{corollary}[theorem]{Corollary}
\newtheorem{definition}[theorem]{Definition}
\newtheorem{example}[theorem]{Example}
\newtheorem{lemma}[theorem]{Lemma}
\newtheorem{remark}[theorem]{Remark}
\numberwithin{equation}{section}
 \numberwithin{theorem}{section}
\renewcommand{\thetheorem}{\thesection.\arabic{theorem}}
\def\Kw{\mathbf{w}}
\def\ds{\displaystyle}
\begin{document}

\today

\title[Parameter Estimation in SPDEs With fBM]
{Asymptotic Properties of the Maximum Likelihood Estimator
for Stochastic Parabolic Equations with Additive Fractional Brownian
Motion}

\author{Igor {Cialenco} }
\address{Department of Applied Mathematics,
Illinois Institute of Technology\\
10 West 32nd Str, Bld E1, Room 208, Chicago, IL 60616, USA}
\email{igor@math.iit.edu, http://math.iit.edu/$\sim$igor}

\author{Sergey V. {Lototsky}}  \thanks{SVL acknowledges support
from the NSF CAREER award DMS-0237724, as well as hospitality and
support of the Scuola Normale Superiore (Pisa, Italy).
 SVL and JP are grateful
to the Institut Mittag-Leffler (Djursholm, Sweden)  for
the hospitality and support. The work of JP was also partially supported by
MSMT Research Plan MSM 4977751301}
\address{Department of Mathematics,
University of Southern California\\
3620 S. Vermont Avenue, KAP 108,
Los Angeles, CA 90089, USA}
\email{lototsky@math.usc.edu, http://math.usc.edu/$\sim$lototsky}

\author{Jan Pospisil}
\address{Department of Mathematics\\
Faculty of Applied Sciences\\
 University of West Bohemia\\
 Univerzitn\'{i}~22, 306~14\\
  Plze\v{n}, Czech Republic}
\email{honik@kma.zcu.cz,  http://www.kma.zcu.cz/Pospisil}

\begin{abstract}
A parameter estimation problem is considered for a diagonaliazable
stochastic evolution equation using a finite number of the Fourier coefficients
of the solution. The equation is driven by additive noise that is white in space and
fractional in time with the Hurst parameter $H\geq 1/2$.
 The objective is to study  asymptotic properties of the maximum likelihood
 estimator as the number of the Fourier coefficients increases.
  A necessary and sufficient condition for consistency and asymptotic  normality
  is presented in terms of the eigenvalues of the operators in the equation.
\end{abstract}

\keywords{Asymptotic normality, ergodicity,
parameter estimation, stochastic evolution equations.}
\subjclass[2000]{Primary 60H15; Secondary 62F12}

\maketitle

\section{Introduction}

In the classical statistical estimation problem, the starting point is a family
$\mathbf{P}^{\theta}$ of probability measures depending on the parameter
$\theta$ in some subset $\Theta$ of a finite-dimensional
Euclidean space. Each $\mathbf{P}^{\theta}$ is the distribution of a random
element. It is assumed that a  realization of one random element
 corresponding to one value $\theta=\theta_0$ of the parameter is
 observed, and the objective is to estimate the values of this
 parameter from the observations.

 The intuition is to select the value $\theta$ corresponding to
 the random element that is {\em most likely} to produce the observations.
 A rigorous mathematical implementation of this idea leads to the notion
 of the regular statistical model \cite{IbragimovKhasminskiiBook1981}:
 the statistical model (or estimation problem)
 $\mathbf{P}^{\theta},\ \theta\in\Theta$, is
 called regular, if the following two conditions are satisfied:
 \begin{itemize}
 \item there exists a probability measure $\mathbf{Q}$ such that
 all measures $\mathbf{P}^{\theta}$ are absolutely continuous
 with respect to $\mathbf{Q}$;
 \item the density $d\mathbf{P}^\theta/d\mathbf{Q}$, called the
 likelihood ratio, has a special property,
 called local asymptotic normality.
 \end{itemize}
 If at least one of the above conditions is violated, the problem is
 called singular.

 In regular models, the estimator $\widehat{\theta}$
  of the unknown parameter is constructed by
 maximizing the likelihood ratio and is called the maximum likelihood estimator
 (MLE). Since, as a rule, $\widehat{\theta}\not=\theta_0$,
 the consistency of the estimator is studied, that is, the convergence
 of $\widehat{\theta}$ to $\theta_0$ as more and more information becomes
 available. In all known regular statistical problems, the amount of information
 can be increased in one of two ways: (a) increasing the sample size,
 for example, the observation time interval (large sample asymptotic);
(b) reducing the amplitude of noise (small noise asymptotic).

In finite-dimensional models, the only way to increase the sample
size is to increase the observation time. In infinite-dimensional models,
in particular, those provided by stochastic partial differential equations
(SPDEs), another possibility is to increase the dimension of the spatial
projection of the observations. Thus, a consistent estimator can be possible
on a finite time interval with fixed noise intensity.
 This possibility was first suggested by
Huebner at al. \cite{HubnerRozovskiiKhasminskii} for parabolic equations driven
by additive space-time  white noise, and was further investigated
by Huebner and Rozovskii \cite{HuebnerRozovskii}, where a necessary and sufficient
condition for the existence of a consistent estimator was stated in terms of the
orders of the operators in the equation.

The objective of the
current paper is to extend the model from \cite{HuebnerRozovskii} to
parabolic equations in which the time component of the noise is fractional
with the Hurst parameter $H\geq1/2$. More specifically, we consider an
abstract evolution equation
\begin{equation}
\label{intr1}
u(t)+\int_0^t(\mathcal{A}_0+\theta \mathcal{A}_1)u(s)ds = W^H(t),
\end{equation}
where $\mathcal{A}_0$, $\mathcal{A}_1$ are known linear operators and
$\theta\in \Theta\subseteq \mathbb{R}$ is the unknown parameter; the
zero initial condition is taken to simplify the presentation.
The noise $W^H(t)$ is a formal series
\begin{equation}
\label{intr0}
W^H(t)=\sum_{j=1}^{\infty} w_j^H(t)h_j,
\end{equation}
where $\{w_j^H,\ j\geq 1\}$ are independent fractional Brownian motions with the
same Hurst parameter $H\geq 1/2$ and $\{h_j,\ j\geq 1\}$ is an orthonormal basis in a
Hilbert space $\mathbf{H}$; $H=1/2$ corresponds to the usual space-time white noise.
 Existence and uniqueness of the solution for such
equations are well-known for all $H\in (0,1)$
 (see, for example, Tindel et al.
\cite[Theorem 1]{TTV-linPDE}).

 The main additional assumption about \eqref{intr1}, both in
 \cite{HuebnerRozovskii} and in the current paper,
is that the equation is {\em diagonalizable}:
$\{h_j,\ j\geq 1\}$ from \eqref{intr0} is a common system of
eigenfunction of the operators $\mathcal{A}_0$ and $\mathcal{A}_1$:
\begin{equation}
\label{intr2}
\mathcal{A}_0h_j=\rho_jh_j,\ \mathcal{A}_1=\nu_jh_j.
\end{equation}
Under certain conditions on the numbers $\rho_j, \ \nu_j$,
 the solution of \eqref{intr1} is a convergent
Fourier series $u(t)=\sum_{j\geq 1} u_j(t)h_j$, and each $u_j(t)$
is a fractional Ornstein-Uhlenbeck (OU)  process. An
$N$-dimensional projection of the solution is then an
$N$-dimensional fractional OU process with independent components.
A  Girsanov-type formula (for example, from Kleptsyna et al.
\cite[Theorem 3]{Klep2}) leads to a maximum likelihood estimator
$\hat{\theta}_N$ of $\theta$ based on the first $N$ Fourier
coefficients $u_1,\ldots, u_N$ of the solution of \eqref{intr1}.
An explicit expression for this estimator exists but requires a
number of additional notations; see formula \eqref{MLE} on page
\pageref{MLE} below.

The following  is the main results of the paper.

\begin{theorem}
Define $\mu_j=\theta\nu_j+\rho_j$ and  assume that the series
$\sum_j (1+|\mu_j|)^{-\gamma}$ converges for some $\gamma>0$.
Then the maximum likelihood estimator $\hat{\theta}_N$
 of $\theta$ is strongly consistent and
asymptotically normal, as $N\to \infty,$ if and only if the
series $\sum_j\nu_j^2\mu_j^{-1}$ diverges; the rate of convergence of the estimator
is given by the square root of the partial sums of this series: as $N\to \infty$,
the sequence
$\left(\sum_{j\leq N} \nu_j^2\mu_j^{-1}\right)^{1/2}(\hat{\theta}_N-\theta)$
converges in distribution to a standard Gaussian random variable.
\end{theorem}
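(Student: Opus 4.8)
The plan is to reduce everything to the scalar components and then apply limit theorems for sums of independent random variables. Because $\{h_j\}$ diagonalizes both $\mathcal{A}_0$ and $\mathcal{A}_1$, projecting \eqref{intr1} onto $h_j$ shows that the Fourier coefficients $u_j$ are mutually independent scalar fractional Ornstein--Uhlenbeck processes solving $du_j(t)=-\mu_j u_j(t)\,dt+dw_j^H(t)$ with $u_j(0)=0$ and $\mu_j=\theta\nu_j+\rho_j$. The Girsanov-type density of Kleptsyna et al.\ \cite{Klep2} factorizes over $j$, so the log-likelihood built from $u_1,\dots,u_N$ is a sum of $N$ independent terms, and differentiating in $\theta$ puts the MLE \eqref{MLE} in the form
\[
\hat{\theta}_N-\theta=\frac{\xi_N}{\eta_N},\qquad
\xi_N=\sum_{j\le N}\zeta_j,\quad \zeta_j=\nu_j\int_0^T Q_j(t)\,dM_j(t),\quad
\eta_N=\sum_{j\le N}\nu_j^2\int_0^T Q_j(t)^2\,d\langle M_j\rangle(t),
\]
where $M_j$ is the fundamental (Gaussian) martingale associated with $w_j^H$ on the fixed horizon $[0,T]$ and $Q_j$ is the corresponding kernel transform of $u_j$. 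The summands $\zeta_j$ are independent and mean zero, so by the Itô isometry $\mathrm{Var}(\xi_N)=\sum_{j\le N}E[\zeta_j^2]=E[\eta_N]$, while $\eta_N$ is itself a sum of independent nonnegative terms.

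The first substantive step is to control the denominator. I would compute the per-component expected information $E\big[\nu_j^2\int_0^T Q_j^2\,d\langle M_j\rangle\big]$ and show that, as $|\mu_j|\to\infty$, it is asymptotically proportional to $\nu_j^2\mu_j^{-1}$; the proportionality constant is an explicit function of $H$ and $T$ arising from the large mean-reversion behaviour of the fractional OU process and its martingale transform, and it is built into the normalization of \eqref{MLE}, so that $E[\eta_N]\sim I_N:=\sum_{j\le N}\nu_j^2\mu_j^{-1}$. The hypothesis $\sum_j(1+|\mu_j|)^{-\gamma}<\infty$ forces $|\mu_j|\to\infty$, so these asymptotics apply to all but finitely many $j$. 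To pass from mean to almost-sure behaviour I would invoke the Kolmogorov strong law for the independent nonnegative summands of $\eta_N$: estimating their variances as $O(\nu_j^4\mu_j^{-2})$ and using $I_N\to\infty$ together with the summability supplied by the hypothesis gives $\eta_N/E[\eta_N]\to1$ a.s., hence $\eta_N\sim I_N\to\infty$ a.s.\ precisely when $\sum_j\nu_j^2\mu_j^{-1}$ diverges.

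Strong consistency then follows quickly: since $s_N^2:=\mathrm{Var}(\xi_N)=E[\eta_N]\to\infty$, Kolmogorov's strong law gives $\xi_N/s_N^2\to0$ a.s., so $\hat{\theta}_N-\theta=(\xi_N/s_N^2)(s_N^2/\eta_N)\to0$ a.s. For asymptotic normality I would apply the Lyapunov central limit theorem to the independent increments $\zeta_j$, verifying $\sum_{j\le N}E[\zeta_j^4]=o(s_N^4)$ through uniform fourth-moment bounds on the Gaussian stochastic integrals $\int_0^T Q_j\,dM_j$; this yields $\xi_N/s_N\to N(0,1)$. Writing $\sqrt{I_N}\,(\hat{\theta}_N-\theta)=(\xi_N/s_N)(\sqrt{I_N}/s_N)(s_N^2/\eta_N)$ and using $s_N^2\sim I_N$ and $\eta_N\sim s_N^2$, Slutsky's theorem gives $\sqrt{I_N}\,(\hat{\theta}_N-\theta)\to N(0,1)$, the claimed rate. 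For the necessity (``only if'') direction, if $\sum_j\nu_j^2\mu_j^{-1}<\infty$ then $E[\eta_N]$ is bounded, so the increasing sequence $\eta_N$ converges a.s.\ to a finite strictly positive limit and the $L^2$-bounded sum $\xi_N$ converges a.s.\ to a nondegenerate limit; hence $\hat{\theta}_N-\theta$ converges to a nondegenerate random variable and the estimator is neither consistent nor asymptotically normal.

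The technical heart, and the main obstacle, is the analysis of the fractional components underlying the first two steps. For $H>1/2$ the process $u_j$ is neither Markov nor a semimartingale in its own filtration, the transform $Q_j$ and the bracket $d\langle M_j\rangle$ carry the singular weights $s^{1/2-H}(t-s)^{1/2-H}$ inherent to the fundamental martingale, and there is no closed-form stationary covariance. Extracting the sharp large-$\mu_j$ asymptotics $E[\eta_j]\sim\nu_j^2\mu_j^{-1}$ together with the matching $O(\nu_j^4\mu_j^{-2})$ variance and fourth-moment estimates, \emph{uniformly} in $j$, is where the work lies; these reduce to Gaussian-chaos computations with the explicit fractional kernels, and the assumption $\sum_j(1+|\mu_j|)^{-\gamma}<\infty$ is exactly what guarantees that $|\mu_j|\to\infty$ fast enough to render both the asymptotics and the two summability conditions (for the strong law and for Lyapunov's condition) effective.
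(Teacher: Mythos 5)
Your overall route is the paper's: project onto the eigenbasis, use the Girsanov density built from the fundamental martingale $M_j^H$ and the transformed process $Q_j$ to write $\widehat{\theta}_N-\theta$ as a ratio of sums of independent random variables (the paper's \eqref{conv1}), use the isometry $\mathbb{E}\bigl(\int_0^TQ_j\,dM_j^H\bigr)^2=\mathbb{E}\int_0^TQ_j^2\,d\mathbf{w}_H$, establish $\mathbb{E}\int_0^TQ_j^2\,d\mathbf{w}_H\sim T/(2\mu_j)$, and then run an Abel--Dini estimate, Kolmogorov's strong law, and the CLT, with the converse coming from almost sure convergence of the denominator when $\sum_j\nu_j^2\mu_j^{-1}<\infty$. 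This matches \eqref{conv1}--\eqref{conv5} and the Corollary step by step.

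There is, however, one concrete gap: your variance bound for the denominator summands. You propose $\mathrm{Var}\bigl(\nu_j^2\int_0^TQ_j^2\,d\mathbf{w}_H\bigr)=O(\nu_j^4\mu_j^{-2})$, which is only the order of the squared mean. The paper's Lemma \ref{lmA2} shows the variance is actually $\sim\tfrac{T}{2}\,\nu_j^4\mu_j^{-3}$ --- a full order of $\mu_j$ smaller --- and that cancellation is what closes the strong law. Writing $a_n=\nu_n^2\mu_n^{-1}$ and $A_n=\sum_{j\le n}a_j$, the Kolmogorov condition for your normalization is $\sum_n\mathrm{Var}(\cdot)/A_n^2<\infty$; under your bound this becomes $\sum_na_n^2/A_n^2<\infty$, which does \emph{not} follow from $A_n\to\infty$ (take $a_n=2^n$: then $a_n^2/A_n^2\to 1/4$), whereas the sharper bound reduces it to $\sum_n a_n^2\mu_n^{-1}/A_n^2$, controlled via \eqref{conv000}, \eqref{eq:unbnd}, and the boundedness of $\nu_n/\mu_n$. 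So the step ``$\eta_N/\mathbb{E}[\eta_N]\to 1$ a.s.'' does not close as written in the stated generality. Moreover, the only known source of this $\mu_j^{-3}$ decay (and of the matching higher-moment control you would need for Lyapunov's condition --- note the $\zeta_j$ are second-chaos, not Gaussian, so hypercontractivity alone again only yields the $a_j^2$ order) is the closed-form Laplace transform of $\int_0^TQ_j^2\,d\mathbf{w}_H$ from Kleptsyna--Le~Breton, evaluated via Bessel-function asymptotics in the appendix; this is precisely the device that forces the restriction $H\ge 1/2$, and your proposed ``Gaussian-chaos computations with the explicit fractional kernels'' would have to reproduce it rather than replace it with generic moment bounds.
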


If the operators $\mathcal{A}_0$ and $\mathcal{A}_1$ are elliptic of
orders $m_0$ and $m_1$  on $L_2(M)$, where $M$ is a $d$-dimensional
manifold, and $2m=\max(m_0,m_1)$, then the condition
of the theorem becomes $m_1\geq m-(d/2)$; in the case $H=1/2$ this is known from
\cite{HuebnerRozovskii}. Thus, beside extending the results of
\cite{HuebnerRozovskii} to fractional-in-time noise, we also generalize the
necessary and sufficient condition for consistency of the estimator.

While parameter estimation for the finite-dimensional
fractional OU  and similar processes
has been recently investigated by Tudor and Viens \cite{TV-fBMest}
for all $H\in (0,1)$, our analysis in infinite dimensions
 requires more delicate results:  an explicit
expression for the Laplace transform of a certain functional of the fractional
OU process, as obtained by Kleptsyna and Le Brenton \cite{Klep1}, and
for now this expression exists only for $H\geq 1/2$.

\section{Stochastic Parabolic Equations with Additive FBM}
\label{Section-Settings}
In this section we introduce a diagonalizable stochastic
parabolic equation
depending on a parameter and study the main properties of the
solution.

Let $\mathbf{H}$ be a separable Hilbert space with the inner product
$(\cdot, \cdot)_0$ and the corresponding norm $\|\cdot\|_0$. Let
$\Lambda$ be  a densely-defined linear operator on $\mathbf{H}$ with
the following property: there exists a positive number $c$ such that
 $\| \Lambda u\|_0\geq c\|u\|_0$ for
every $u$ from the domain of $\Lambda$. Then the operator powers
$\Lambda^\gamma, \ \gamma\in\mathbb R,$ are
 well defined and generate the spaces $\mathbf{H}^\gamma$:
 for $\gamma>0$, $\mathbf{H}^\gamma$ is the domain of
  $\Lambda^\gamma$; $\mathbf{H}^0=\mathbf{H}$;
 for $\gamma <0$,  $\mathbf{H}^\gamma$ is the completion
 of $\mathbf{H}$ with respect to the norm
 $\| \cdot \|_\gamma := \|\Lambda^{\gamma} \cdot\|_0$ (see for instance
 Krein at al. \cite{KreinPetuninSemenov}).
 By construction, the collection of spaces
 $\{ \mathbf{H}^\gamma,\ \gamma\in \mathbb R\}$
  has the following properties:
 \begin{itemize}
 \item[$\circ$] $\Lambda^{\gamma}(\mathbf{H}^r) = \mathbf{H}^{r-\gamma}$
  for every
$\gamma,r\in\mathbb{R}$;
 \item[$\circ$] For $\gamma_1<\gamma_2$ the space $\mathbf{H}^{\gamma_2}$
  is densely and
 continuously embedded into $\mathbf{H}^{\gamma_1}$:
 $\mathbf{H}^{\gamma_2}
 \subset \mathbf{H}^{\gamma_1}$ and there exists a positive
 number $c_{12}$ such that  $\|u\|_{\gamma_1}
 \leq c_{12} \|u\|_{\gamma_2}$
 for all $u\in \mathbf{H}^{\gamma_2}$;
\item[$\circ$]  For every $\gamma\in\mathbb R$ and $m>0$, the space
$\mathbf{H}^{\gamma -m}$ is the dual of $\mathbf{H}^{\gamma+m}$ relative to
the inner product in $\mathbf{H}^{\gamma}$, with duality
$\langle\cdot,\cdot\rangle_{\gamma,m}$ given by
$$
\langle u_1, u_2 \rangle_{\gamma,m} = (\Lambda^{\gamma -m}u_1,
\Lambda^{\gamma+m}u_2)_0, \ {\rm where\ }
u_1\in\mathbf{H}^{\gamma-m},\ u_2\in\mathbf{H}^{\gamma +m}.
$$
\end{itemize}

In the above construction, the operator $\Lambda$ can be bounded,
and then the norms in all the spaces $\mathbf{H}^{\gamma}$
will be equivalent. A more interesting situation is therefore
when $\Lambda$ is unbounded and plays the role of the first-order
operator.

Let $(\Omega, \mathcal{F},  \mathbb P)$
 be a probability space and let $\{w_j^H,\ j\geq 1\}$ be a
collection of independent fractional Brownian motions on this space
with the same Hurst parameter $H\in (0,1)$:
$$
\mathbb{E}w_j^H(t)=0,\ \ \mathbb{E}\big(w_j^H(t)w_j^H(s)\big)=
\frac{1}{2}\left(t^{2H}+s^{2H}-|t-s|^{2H}\right).
$$
 Consider the following  equation:
\begin{equation}
\label{eq2}
\begin{cases}
  du(t) + (\mathcal{A}_0 + \theta \mathcal{A}_1) u(t)dt
  =
  \sum\limits_{j\geq 1}g_j(t)dw_j^H(t), \ 0<t\leq T,  \\
  u(0) = u_0\,
\end{cases}
\end{equation}
where $\mathcal{A}_0, \, \mathcal{A}_1$ are linear operators,
 $g_j$ are  non-random,
 and $\theta$ is a scalar parameter  belonging to
an open set $\Theta\subset \mathbb{R}$.

\begin{definition}
\label{def000}
{$     $}

\begin{enumerate}
\item  Equation  \eqref{eq2} is called  {\tt diagonalizable} if the  operators
$\mathcal{A}_0,\ \mathcal{A}_1$, have a common system of eigenfunctions
$\{h_j,\ j\geq 1\}$ such that $\{h_j,\ j\geq 1\}$ is an orthonormal basis in
$\mathbf{H}$ and each $h_j$ belongs to  $\bigcap_{\gamma\in \mathbb{R}}
\mathbf{H}^{\gamma}$.

\item
 Equation  \eqref{eq2}
 is called {\tt $(m,\gamma)$-parabolic} for some
 numbers $m\geq 0$ and $\gamma \in\mathbb{R}$ if
\begin{itemize}
\item[$\circ$]
the operator $\mathcal{A}_0+\theta\mathcal{A}_1$ is uniformly
 bounded from
$\mathbf{H}^{\gamma+m}$ to $\mathbf{H}^{\gamma-m}$ for
$\theta \in \Theta:$ there exists a positive real number $C_1$
such that
\begin{equation}
\label{contA}
\|(\mathcal{A}_0+\theta\mathcal{A}_1)v\|_{\gamma-m}\leq
 C_1\|v\|_{\gamma+m}
\end{equation}
for all $\theta\in \Theta$, $v\in \mathbf{H}^{\gamma+m}$;
\item[$\circ$]
 there exists a positive number $\delta$ and a real number $C$
such that, for every $v\in \mathbf{H}^{\gamma+m}$, $\theta\in \Theta$,
\begin{equation}
\label{parab}
-2\langle (\mathcal{A}_0+\theta\mathcal{A}_1)v,v\rangle_{\gamma,m}
+\delta\|v\|_{\gamma+m}^2\leq
C\|v\|_{\gamma}^2.
\end{equation}
\end{itemize}
\end{enumerate}
\end{definition}

\begin{remark}
\label{rm00}
{\rm
 If equation \eqref{eq2} is $(m,\gamma)$-parabolic,
  then condition \eqref{parab} implies that
$$
\langle (2\mathcal{A}_0+2\theta\mathcal{A}_1+CI)v,v\rangle_{\gamma,m}
\geq \delta\|v\|_{\gamma+m}^2,
$$
where $I$ is the identity operator.
The Cauchy-Schwartz inequality and the continuous embedding of
$\mathbf{H}^{\gamma+m}$ into $\mathbf{H}^{\gamma}$ then imply
$$
\|(2\mathcal{A}_0+2\theta\mathcal{A}_1+CI)v\|_{\gamma}
\geq \delta_1\|v\|_{\gamma}
$$
for some $\delta_1>0$ uniformly in $\theta\in \Theta$.
As a result, we can take $\Lambda=
(2\mathcal{A}_0+2\theta^*\mathcal{A}_1+CI)^{1/(2m)}$ for some fixed
$\theta^*\in\Theta$. If the operator
$\mathcal{A}_0+\theta\mathcal{A}_1$ is unbounded, it is
natural to say that $\mathcal{A}_0+\theta\mathcal{A}_1$
 has order $2m$ and $\Lambda$ has order $1$.
}
\end{remark}

{\em From now on, if equation \eqref{eq2} is
 $(m,\gamma)$-parabolic and
diagonalizable,
we will assume that the operator $\Lambda$ has the same
eigenfunctions as the
operators $\mathcal{A}_0,\ \mathcal{A}_1$;
by Remark \ref{rm00}, this leads to no loss of generality.}

For a diagonalizable equation,  condition
\eqref{parab} can be expressed in terms of the eigenvalues of the
operators in the equation.

\begin{theorem}
\label{th0}
Assume that equation \eqref{eq2} is diagonalizable and
$$
\mathcal{A}_0h_j=\rho_jh_j,\
\mathcal{A}_1h_j=\nu_jh_j.
$$
With no loss of generality {\rm (see Remark \ref{rm00})},
 we also assume that
 $$
 \Lambda h_j=\lambda_jh_j.
 $$
 Then equation \eqref{eq2} is $(m,\gamma)$-parabolic
if and only if there exist  positive real numbers $\delta, C_1$ and
a real number $C_2$ such that, for all $j\geq 1$ and $\theta\in \Theta$,
\begin{align}
&\lambda_j^{-2m}|\rho_j+\theta\nu_j|  \leq C_1; \label{eig1} \\
&-2(\rho_j+\theta\nu_j)+
\delta\lambda_j^{2m} \leq C_2. \label{eig2}
\end{align}

\end{theorem}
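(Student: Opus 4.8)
The plan is to translate the abstract operator conditions \eqref{contA} and \eqref{parab} into statements about scalars by testing them on the eigenfunctions $h_j$, exploiting the fact that the equation is diagonalizable and that $\Lambda$ shares the same eigenbasis. The key computational device is that for any $v=h_j$ we have $(\mathcal{A}_0+\theta\mathcal{A}_1)h_j=(\rho_j+\theta\nu_j)h_j$ and $\Lambda^\alpha h_j=\lambda_j^\alpha h_j$, so every norm and duality pairing appearing in the definition collapses to an explicit expression in $\lambda_j$, $\rho_j$, $\nu_j$. Concretely, $\|h_j\|_\gamma^2=(\Lambda^\gamma h_j,\Lambda^\gamma h_j)_0=\lambda_j^{2\gamma}$ (since the $h_j$ are orthonormal in $\mathbf{H}$), and the duality pairing $\langle(\mathcal{A}_0+\theta\mathcal{A}_1)h_j,h_j\rangle_{\gamma,m}=(\Lambda^{\gamma-m}(\rho_j+\theta\nu_j)h_j,\Lambda^{\gamma+m}h_j)_0=(\rho_j+\theta\nu_j)\lambda_j^{2\gamma}$.

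For the forward direction (parabolicity $\Rightarrow$ \eqref{eig1}--\eqref{eig2}), I would substitute $v=h_j$ into both defining inequalities. Inequality \eqref{contA} reads $\|(\rho_j+\theta\nu_j)h_j\|_{\gamma-m}\le C_1\|h_j\|_{\gamma+m}$, i.e. $|\rho_j+\theta\nu_j|\,\lambda_j^{\gamma-m}\le C_1\lambda_j^{\gamma+m}$, which after dividing by $\lambda_j^{\gamma-m}$ (recall $\lambda_j\ge c>0$ by the lower bound on $\Lambda$) gives exactly \eqref{eig1}. Similarly \eqref{parab} becomes $-2(\rho_j+\theta\nu_j)\lambda_j^{2\gamma}+\delta\lambda_j^{2(\gamma+m)}\le C\lambda_j^{2\gamma}$, and dividing through by $\lambda_j^{2\gamma}>0$ yields \eqref{eig2} with $C_2=C$. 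So the forward direction is immediate once the pairings are computed.

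For the converse (\eqref{eig1}--\eqref{eig2} $\Rightarrow$ parabolicity), the strategy is to expand a general $v\in\mathbf{H}^{\gamma+m}$ in the orthonormal basis, $v=\sum_j v_jh_j$, and express each quantity in the definition as a series. Using $\|v\|_\gamma^2=\sum_j\lambda_j^{2\gamma}v_j^2$, $\|v\|_{\gamma\pm m}^2=\sum_j\lambda_j^{2(\gamma\pm m)}v_j^2$, and $\langle(\mathcal{A}_0+\theta\mathcal{A}_1)v,v\rangle_{\gamma,m}=\sum_j(\rho_j+\theta\nu_j)\lambda_j^{2\gamma}v_j^2$, each of the two operator inequalities reduces to a termwise sum of the corresponding scalar inequalities weighted by $\lambda_j^{2\gamma}v_j^2\ge 0$; summing \eqref{eig1} and \eqref{eig2} against these nonnegative weights recovers \eqref{contA} and \eqref{parab}. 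For \eqref{contA} one must handle the cross terms a little more carefully, since $\|(\mathcal{A}_0+\theta\mathcal{A}_1)v\|_{\gamma-m}^2=\sum_j(\rho_j+\theta\nu_j)^2\lambda_j^{2(\gamma-m)}v_j^2$, and then \eqref{eig1} squared gives $(\rho_j+\theta\nu_j)^2\lambda_j^{-4m}\le C_1^2$, so $\sum_j(\rho_j+\theta\nu_j)^2\lambda_j^{2(\gamma-m)}v_j^2\le C_1^2\sum_j\lambda_j^{2(\gamma+m)}v_j^2=C_1^2\|v\|_{\gamma+m}^2$, which is \eqref{contA}.

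The main obstacle, and the point needing the most care, is justifying these manipulations at the level of the full spaces rather than merely on the dense span of the $\{h_j\}$: one should check that the series defining the norms and pairings converge for $v\in\mathbf{H}^{\gamma+m}$, that $(\mathcal{A}_0+\theta\mathcal{A}_1)v$ indeed lies in $\mathbf{H}^{\gamma-m}$ so the pairing $\langle\cdot,\cdot\rangle_{\gamma,m}$ is well defined, and that the uniformity of the constants $\delta,C_1,C_2$ in $\theta\in\Theta$ is preserved in both directions (the constants for the scalar inequalities are exactly those from the operator inequalities, so uniformity transfers automatically). Since the $h_j$ diagonalize both operators and $\Lambda$ simultaneously, these convergence issues are routine—everything is a sum of nonnegative terms controlled by the hypotheses—so the whole argument is essentially a change of viewpoint from operators to their eigenvalue sequences, with the termwise comparison being the substance of both implications.
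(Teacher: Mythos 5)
Your argument is correct and follows essentially the same route as the paper's own proof: test \eqref{contA} and \eqref{parab} on $v=h_j$ using $\|h_j\|_{\gamma+r}=\lambda_j^r\|h_j\|_\gamma$ to get \eqref{eig1}--\eqref{eig2}, and recover the operator inequalities from the scalar ones via the (dense) eigenbasis. The paper merely gestures at the converse with a one-line density remark, whereas you carry out the termwise series estimates explicitly; this is a welcome elaboration, not a different method.
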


\begin{proof}
We show that, for a diagonalizable equation, \eqref{eig1} is equivalent
to \eqref{contA} and \eqref{eig2} is equivalent to \eqref{parab}.
Indeed, note that for every $\gamma,\ r\in \mathbb{R}$,
$$
\|h_j\|_{\gamma+r}=\|\Lambda^rh_j\|_{\gamma}
=\lambda_j^r\|h_j\|_{\gamma}.
$$
Then \eqref{eig1} is \eqref{contA}  and
\eqref{eig2} is  \eqref{parab}, with $v=h_j$.
Since both \eqref{eig1} and
\eqref{eig2} are uniform in $j$ and the
collection $\{h_j,\ j\geq 1\}$ is dense in every $\mathbf{H}^{\gamma}$,
 the proof of the theorem is complete.
\end{proof}

\begin{remark}
{\rm (a) As conditions \eqref{eig1},
\eqref{eig2} do not involve $\gamma$, we
conclude that a diagonalizable  equation is $(m,\gamma)$-parabolic
 {\em for some} $\gamma$ if and only if it is $(m,\gamma)$-parabolic
{\em for every} $\gamma$. As a result, in the future we will simply say
that the equation is $m$-parabolic.

(b) If the operators $\mathcal{A}_0+\theta\mathcal{A}_1$ and
$\Lambda$ are unbounded, then \eqref{eig2} implies that
$\mu_j(\theta)=\rho_j+\theta\nu_j$
 is positive for all sufficiently large $j$.
}
\end{remark}

From now on we will assume that equation \eqref{eq2} is diagonalizable
and fix the basis $\{h_j,\ j\geq 1\}$ in $\mathbf{H}$. Since each $h_j$
belongs to every $\mathbf{H}^{\gamma}$ and, by construction,
 $\bigcap_{\gamma} \mathbf{H}^{\gamma}$ is dense in
 $\bigcup_{\gamma} \mathbf{H}^{\gamma}$, every element $f$ of
$\bigcup_{\gamma}\mathbf{H}^{\gamma}$ has a unique expansion
$\sum_{j\geq 1} f_jh_j$, where $f_j=\langle f,h_j\rangle_{0,m}$ for
a suitable $m$.

\begin{definition}
 The {\tt space-time fractional Brownian motion}
 $W^H$ is an element of $\bigcup_{\gamma\in \mathbb{R}} \mathbf{H}^{\gamma}$
 with the expansion
\begin{equation}
\label{SfBM} W^H(t)=\sum_{j\geq 1} w_j^H(t)h_j.
\end{equation}
\end{definition}

\begin{definition}
\label{def:sol}
Let $W^H$ be a space-time fractional Brownian motion.
The solution of the diagonalizable equation
\begin{equation}
\label{eq200}
\begin{cases}
  du(t) + (\mathcal{A}_0 + \theta \mathcal{A}_1) u(t)dt
  =
  dW^H(t), \ 0<t\leq T,  \\
  u(0) = u_0\,
\end{cases}
\end{equation}
 $u_0\in \mathbf{H},$ is a random process with
values in $\bigcup_{\gamma}\mathbf{H}^{\gamma}$ and an
expansion
\begin{equation}
\label{eq-sol}
u(t)=\sum_{j\geq 1} u_j(t)h_j,
\end{equation}
where
\begin{equation}
\label{eq:OUk}
u_j(t)=(u_{0},h_j)_0\,e^{-(\theta\nu_j+\rho_j)t}
+
\int_0^te^{-(\theta\nu_j+\rho_j)(t-s)}dw^H_j(s).
\end{equation}
\end{definition}

Notice that, due to the special structure of the equation,
 Definition \ref{def:sol} implies both
existence and uniqueness of the solution.

To simplify further notations we write
\begin{equation}
\label{muk}
\mu_j(\theta)=\theta\nu_j+\rho_j.
\end{equation}
By \eqref{eig2}, if equation \eqref{eq2} is $m$-parabolic and diagonalizable, then,
for every $\theta\in \Theta$,
there exists a positive integer $J$ such that
$$
\mu_j(\theta)>0\ \ \  {\rm for \ all\ } \ j\geq J.
$$

\begin{theorem}
\label{th-exist}
Assume that
\begin{enumerate}
\item $H\geq 1/2$;
\item equation \eqref{eq2} is $m$-parabolic and diagonalizable;
\item There exists a positive real number $\gamma$ such that
\begin{equation}
\label{eq:unbnd}
\sum_{j\geq 1} (1+|\mu_j(\theta)|)^{-\gamma}<\infty.
\end{equation}
\end{enumerate}
Then, for every $t>0$,
\begin{enumerate}
\item $W^H(t)\in L_2(\Omega; \mathbf{H}^{-m\gamma})$;
\item $u(t)\in L_2(\Omega; \mathbf{H}^{-m\gamma+2mH})$.
\end{enumerate}
\end{theorem}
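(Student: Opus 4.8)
The plan is to evaluate both norms coordinate-wise in the orthonormal basis $\{h_j\}$ and reduce everything to the convergent series in hypothesis~(3). Since $\Lambda h_j = \lambda_j h_j$ and $\{h_j\}$ is orthonormal in $\mathbf{H}^0$, the $h_j$ are orthogonal in each $\mathbf{H}^\gamma$ with $\|h_j\|_\gamma^2 = \lambda_j^{2\gamma}$. The first step is an eigenvalue comparison: the equation is $m$-parabolic, so Theorem~\ref{th0} supplies \eqref{eig1}--\eqref{eig2}, and combining $|\mu_j(\theta)| \le C_1\lambda_j^{2m}$ with $\delta\lambda_j^{2m} \le C_2 + 2\mu_j(\theta)$ and the built-in bound $\lambda_j \ge c > 0$ yields constants $0 < c_1 \le c_2$ with $c_1(1+|\mu_j(\theta)|) \le \lambda_j^{2m} \le c_2(1+|\mu_j(\theta)|)$ for all $j$. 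In particular $\sum_j \lambda_j^{-2m\gamma}$ converges if and only if $\sum_j (1+|\mu_j(\theta)|)^{-\gamma}$ does, which it does by~(3).

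For part~(1), expanding \eqref{SfBM} and using $\mathbb{E}[(w_j^H(t))^2] = t^{2H}$ gives
\[
\mathbb{E}\|W^H(t)\|_{-m\gamma}^2
= \sum_{j\ge 1}\mathbb{E}\big[(w_j^H(t))^2\big]\,\lambda_j^{-2m\gamma}
= t^{2H}\sum_{j\ge 1}\lambda_j^{-2m\gamma},
\]
which is finite by the comparison above.

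For part~(2), I would expand $u(t)=\sum_j u_j(t)h_j$ with $u_j$ from \eqref{eq:OUk}, so that
\[
\mathbb{E}\|u(t)\|_{-m\gamma+2mH}^2
= \sum_{j\ge 1}\mathbb{E}\big[u_j(t)^2\big]\,\lambda_j^{2(-m\gamma+2mH)}.
\]
The Wiener integral $I_j=\int_0^t e^{-\mu_j(t-s)}\,dw_j^H(s)$ has mean zero, so the cross term vanishes and $\mathbb{E}[u_j(t)^2]=(u_0,h_j)_0^2\,e^{-2\mu_j t}+V_j(t)$ with $V_j(t):=\mathbb{E}[I_j^2]$. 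The crucial estimate is $V_j(t)\le c_H\,\mu_j^{-2H}$ for the indices with $\mu_j>0$. For $H>1/2$ the fractional Wiener isometry gives $V_j(t)=H(2H-1)\int_0^t\!\int_0^t e^{-\mu_j a}e^{-\mu_j b}|a-b|^{2H-2}\,da\,db$; extending the domain of integration from $[0,t]^2$ to $[0,\infty)^2$ (legitimate since the integrand is nonnegative) and rescaling $a=x/\mu_j,\ b=y/\mu_j$ extracts a factor $\mu_j^{-2H}$ times the finite constant $\int_0^\infty\!\int_0^\infty e^{-x-y}|x-y|^{2H-2}\,dx\,dy$, while for $H=1/2$ one computes directly $V_j(t)=(1-e^{-2\mu_j t})/(2\mu_j)\le(2\mu_j)^{-1}$. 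Since $\lambda_j^{2(-m\gamma+2mH)}\asymp(1+|\mu_j|)^{-\gamma+2H}$ equals $\mu_j^{-\gamma+2H}$ up to constants for large $j$, the stochastic part is bounded by $C\sum_j\mu_j^{-\gamma}<\infty$; the deterministic part is controlled because $|\mu_j|\to\infty$ (forced by~(3)) makes $e^{-2\mu_j t}\mu_j^{-\gamma+2H}$ uniformly bounded in $j$ for each fixed $t>0$, and $\sum_j(u_0,h_j)_0^2=\|u_0\|_0^2<\infty$. The finitely many indices with $\mu_j\le 0$ contribute only a finite amount.

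The step I expect to be the main obstacle is the variance bound $V_j(t)\le c_H\mu_j^{-2H}$: this is precisely where the fractional Wiener isometry and the hypothesis $H\ge 1/2$ are used, the kernel $|a-b|^{2H-2}$ failing to be integrable in the needed way once $H<1/2$. The remaining arguments are routine bookkeeping resting on the eigenvalue comparison and the orthogonality of $\{h_j\}$ in the scale $\{\mathbf{H}^\gamma\}$.
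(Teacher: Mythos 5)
Your proposal is correct and follows essentially the same route as the paper: expand coordinatewise in $\{h_j\}$, convert the parabolicity conditions \eqref{eig1}--\eqref{eig2} into a two-sided comparison between $\lambda_j^{2m}$ and $1+|\mu_j(\theta)|$, and control $\mathbb{E}u_j^2(t)$ via the Pipiras--Taqqu representation of the variance of the fractional Wiener integral. The only differences are cosmetic: where the paper computes the exact asymptotics $\lim_j|\mu_j|^{2H}\mathbb{E}u_j^2(t)=H(2H-1)\Gamma(2H-1)$, you settle for the upper bound $V_j(t)\le c_H\mu_j^{-2H}$ by extending the integration domain and rescaling (which suffices here), and you explicitly account for the initial-condition term $(u_0,h_j)_0^2e^{-2\mu_j t}$, which the paper's displayed formula for $\mathbb{E}u_j^2(t)$ silently drops.
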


\begin{proof}
Condition \eqref{eq:unbnd} implies that $\lim_{j\to \infty} |\mu_j| = \infty$,
 and consequently the operators
$\mathcal{A}_0+\theta\,\mathcal{A}_1$ and $\Lambda$ are unbounded.
The parabolicity assumption and Theorem \ref{th0} then imply that,
for all sufficiently large $j$,
$$
1+|\mu_j(\theta)|\leq C_2\lambda_j^{2m},
$$
 uniformly in $\theta\in \Theta$.
$$
\mathbb{E}\|W^H(t)\|^2_{-m\gamma}=
t^{2H}\sum_{j\geq 1} \lambda_j^{-2m\gamma}\leq C_2t^{2H}\sum_{j\geq 1}
(1+|\mu_j(\theta)|)^{-\gamma}<\infty.
$$

Next, the properties of the fractional Brownian motion imply
$$
\mathbb{E}u_j^2(t)=H(2H-1)e^{-2\mu_j(\theta)t}
\int_0^t\int_0^t e^{\mu_j(\theta)(s_1+s_2)}|s_1-s_2|^{2H-2}ds_1ds_2;
$$
see, for example, Pipiras and Taqqu \cite[formulas (4.1), (4.2)]{PipTaq-PTRF}.
By direct computation,
\begin{equation}
\label{gamma}
\lim_{j\to \infty} |\mu_j(\theta)|^{2H}\mathbb{E}u_j^2(t)
=H(2H-1)\int_0^{\infty}
x^{2H-2}e^{-x}dx=H(2H-1)\Gamma(2H-1).
\end{equation}
Consequently,
\begin{equation}
\label{eq44}
\sum_{j=1}^{\infty} (1+|\mu_j(\theta)|)^{-\gamma+2H}
 \mathbb{E}|u_j(t)|^2<\infty,
\end{equation}
and the second conclusion of the theorem follows.
\end{proof}

\begin{example}
\label{ex:main}
{\rm
(a) For $0<t\leq T$ and $x\in (0,1)$, consider the equation
\begin{equation}
\label{ex00}
du(t,x)-\theta\,u_{xx}(t,x)dt= dW^H(t,x)
\end{equation}
with periodic boundary conditions, where
$u_{xx}=\partial^2 u/\partial x^2$.
Then $\mathbf{H}^{\gamma}$ is the Sobolev space on the
unit circle (see, for example, Shubin \cite[Section I.7]{Shubin}) and
$\Lambda=\sqrt{I-\boldsymbol{\Delta}}$, where $\boldsymbol{\Delta}$ is
the Laplace operator on $(0,1)$ with periodic boundary conditions.
Direct computations show that equation \eqref{ex00} is diagonalizable;
it is $1$-parabolic if and only if $\theta>0$.
Also, $\mu_j=-\theta \pi^2 j^2$, so that,
by Theorem \ref{th-exist}
the solution $u(t)$ of \eqref{ex00} is an element of
$L_2(\Omega; \mathbf{H}^{-\gamma+2H})$ for every $t>0$,
$\gamma>1/2$, and $\theta>0$.

(b) Let $G$ be a smooth bounded domain in
$\mathbb{R}^d$. Let $\boldsymbol{\Delta}$ be the Laplace operator
on $G$ with zero boundary conditions. It is known
 {\rm (for example, from
Shubin \cite{Shubin})}, that
\begin{enumerate}
\item the eigenfunctions $\{h_j,\ j\geq 1\}$
 of $\boldsymbol{\Delta}$
are smooth in $G$ and form an orthonormal basis in $L_2(G)$;
\item the corresponding eigenvalues $\sigma_j,\ j\geq 1$,
can be arranged so that $0<-\sigma_1\leq -\sigma_2\leq \ldots$,
and there
exists a number $c>0$ such that $|\sigma_j|\sim cj^{2/d}$, that is,
\begin{equation}
\label{Lapl-dom}
\lim_{j\to \infty} |\sigma_j|j^{-2/d}=c.
\end{equation}
\end{enumerate}

We take $\mathbf{H}=L_2(G)$,
$\Lambda=\sqrt{I-\boldsymbol{\Delta}}$, where $I$ is the identity
 operator.
 Then $\|\Lambda u\|_0\geq \sqrt{1-\sigma_1}
\|u\|_0$ and the operator $\Lambda$ generates the Hilbert spaces
$\mathbf{H}^{\gamma}$, and,  for every $\gamma\in \mathbb{R}$,
the space
$\mathbf{H}^{\gamma}$ is the closure of the set of smooth compactly
 supported function on $G$ with respect to the norm
$$
\left(\sum_{j\geq 1} (1+j^2)^{\gamma}|\varphi_j|^2\right)^{1/2},\
{\rm \ where\ } \varphi_j=\int_G \varphi(x)h_j(x)dx,
$$
which is an equivalent norm in $\mathbf{H}^{\gamma}$.
Then, for every $\theta\in \mathbb{R}$, the stochastic equation
\begin{equation}
\label{eq:exmain}
du-\big(\boldsymbol{\Delta}u+\theta u\big)dt =dW^H(t,x)
\end{equation}
is diagonalizable and $1$-parabolic.
Indeed, we have $\mathcal{A}_1=I$,
$\mathcal{A}_0=-\boldsymbol{\Delta}$,
 and
$$
-2\langle \mathcal{A}_0v,v\rangle_{\gamma,1}
=-2\|v\|_{\gamma+1}^2+
2\|u\|_{\gamma}^2,
$$
 so that \eqref{parab} holds with $\delta=2$ and $C=2-\theta$.
 Finally, by \eqref{Lapl-dom} we see
that \eqref{eq:unbnd} holds for every $\gamma>d/2$. As a result,
by Theorem \ref{th-exist},
the solution $u(t)$ of \eqref{eq:exmain} is an element of
$L_2(\Omega; \mathbf{H}^{-\gamma+2H})$ for every $t>0$,
$\gamma>d/2$, and $\theta\in \mathbb{R}$.
}
\end{example}

\section{The Maximum Likelihood Estimator and its Properties}

Consider the diagonalizable equation
\begin{equation}
\label{eq-est00}
du(t) + (\mathcal{A}_0 + \theta \mathcal{A}_1) u(t)dt
  =
  dW^H(t)
\end{equation}
with solution $u(t)=\sum_{j\geq 1} u_j(t)h_j$ given by
\eqref{eq:OUk};
for simplicity, we assume that $u(0)=0$.
Suppose that the processes $u_1(t), \ldots, u_N(t)$
can be observed for all $t\in [0,T]$.
The problem is to estimate the parameter $\theta$ using
these observations.

Recall the notation $\mu_j(\theta)=\rho_j+\nu_j\theta$, where
 $\rho_j$ and $\nu_j$ are the eigenvalues of $\mathcal{A}_0$ and
 $\mathcal{A}_1$, respectively.
Then each $u_j$ is a fractional Ornstein-Uhlenbeck process
satisfying
\begin{equation}
\label{eq:OUk1}
du_j(t)=-\mu_j(\theta)u_j(t)dt+dw_j^H(t),\ u_j(0)=0,
\end{equation}
and, because of the independence of
$w_j^H$ for different $j$,
 the processes $u_1,\ldots, u_N$ are (statistically) independent.

Let $\Gamma$ denote the Gamma-function (see \eqref{gamma}).
Following Kleptsyna and Le Brenton \cite{Klep1}, we introduce the
 notations
\begin{align}
& \kappa_H=2H\Gamma\left(\frac{3}{2}-H\right)\Gamma\left(H+\frac{1}{2}
\right),\
k_H(t,s)=\kappa_H^{-1}s^{\frac{1}{2}-H}(t-s)^{\frac{1}{2}-H};
\label{kH}\\
& \lambda_H=\frac{2H\Gamma(3-2H)\Gamma\left(H+\frac{1}{2}\right)}
{\Gamma\left(\frac{3}{2}-H\right)},\
\Kw_H(t)=\lambda^{-1}_Ht^{2-2H}; \label{wH}\\
& M^H_j(t)=\int_0^tk_H(t,s)dw_j^H(s),\
Q_j(t)=\frac{d}{d\Kw_H(t)}\int_0^t k_H(t,s)u_j(s)ds; \label{MQ}\\
& Z_j(t)=\int_0^tk_H(t,s)du_j(s). \label{Z}
\end{align}
By a Girsanov-type formula (see, for example,
Kleptsyna et al. \cite[Theorem 3]{Klep2}), the
measure in the space of continuous, $\mathbb{R}^N$-valued
functions, generated by the process $(u_1, \ldots, u_N)$
 is absolutely continuous with respect to the measure generated
 by the process $(w_1^H,\ldots, w_N^H)$, and the
 density is
 \begin{equation}
 \label{density}
 \exp\left(-\sum_{j=1}^N\mu_j(\theta)\int_0^TQ_j(s)dZ_j(s)-
 \sum_{j=1}^N\frac{|\mu_j(\theta)|^2}{2}
 \int_0^T Q_j^2(s)d\Kw_H(s)\right).
 \end{equation}
Maximizing this density with respect to $\theta$ gives
the Maximum Likelihood Estimator (MLE):
\begin{equation}
\label{MLE}
\widehat{\theta}_N=
-\frac{\ds \sum_{j=1}^N\int_0^T\nu_jQ_j(s)\big(dZ_j(s)
+\rho_jQ_j(s)d\Kw_H(s)\big)}
{\ds \sum_{j=1}^N \int_0^T \nu_j^2Q_j^2(s)d\Kw_H(s)}.
\end{equation}
An important feature  of \eqref{MLE} is that the process $Z_j$ is a
semi-martingale (\cite[Lemma 2.1]{Klep1}),
and so there is no stochastic integration with respect to
fractional Brownian motion:
$\int_0^T\nu_jQ_j(s) dZ_j(s)$ is an It\^{o} integral. Notice that,
when $H=1/2$, we have $k_H=1$, $\Kw_H(s)=s$, $Q_j(s)=Z_j(s)=u_j(s)$,
and \eqref{MLE} becomes
\begin{equation}
\label{MLE1/2}
\widehat{\theta}_N=
-\frac{\ds \sum_{j=1}^N\int_0^T\nu_ju_j(s)\big(du_j(s)
+\rho_ju_j(s)ds\big)}
{\ds \sum_{j=1}^N \int_0^T \nu_j^2u_j^2(s)du_j(s)},
\end{equation}
which is the MLE from \cite{HuebnerRozovskii}.

Let us also emphasize that an  implementation of \eqref{MLE} is
impossible without the knowledge of $H$.

The following is the main result of the paper.
\begin{theorem}
\label{th:conv}
Under the assumptions of Theorem \ref{th-exist},
the following conditions are equivalent:
\begin{align}
&(1) \ \ \ \sum_{j=J}^{\infty}\frac{\nu_j^2}{\mu_j(\theta)}=+\infty;
\label{eq:cond1}\\
&(2) \ \ \ \lim_{N\to \infty} \widehat{\theta}_N=\theta
{\rm\  with \ probability\  one,} \label{eq:cond2}
\end{align}
where $J=\min\{j:\mu_i(\theta)>0\ {\rm for\  all \ } i\geq j\}$.
\end{theorem}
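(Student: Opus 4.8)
The plan is to reduce the estimator to the canonical ``score over information'' ratio and then invoke a strong law of large numbers for martingales. The first step exploits the semimartingale structure behind \eqref{MLE}. Since $du_j=-\mu_j(\theta)u_j\,dt+dw_j^H$, the kernel transform of Kleptsyna and Le Breton gives $dZ_j(t)=-\mu_j(\theta)Q_j(t)\,d\Kw_H(t)+dM_j^H(t)$, where $M_j^H(t)=\int_0^t k_H(t,s)\,dw_j^H(s)$ is the fundamental Gaussian martingale with $\langle M_j^H\rangle_t=\Kw_H(t)$. Using $\mu_j(\theta)-\rho_j=\theta\nu_j$ one finds $dZ_j+\rho_jQ_j\,d\Kw_H=-\theta\nu_jQ_j\,d\Kw_H+dM_j^H$, so the numerator of \eqref{MLE} splits into a multiple of the denominator plus a pure martingale term, yielding the clean identity
$$
\widehat\theta_N-\theta=-\frac{\xi_N}{\eta_N},\qquad
\xi_N=\sum_{j=1}^N\int_0^T\nu_jQ_j\,dM_j^H,\quad
\eta_N=\sum_{j=1}^N\int_0^T\nu_j^2Q_j^2\,d\Kw_H .
$$
Thus consistency is equivalent to $\xi_N/\eta_N\to0$.

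The two sequences have a transparent probabilistic meaning. Because the $w_j^H$ are independent, the summands are independent across $j$; each $\int_0^T\nu_jQ_j\,dM_j^H$ is centered and square-integrable, so $\xi_N$ is a martingale in $N$ with respect to $\mathcal G_N=\sigma(u_1,\dots,u_N)$, and by the It\^o isometry $\eta_N=\sum_{j\le N}\zeta_j$ with $\zeta_j:=\int_0^T\nu_j^2Q_j^2\,d\Kw_H\ge0$ is exactly its accumulated quadratic variation. Viewing $\xi_N$ as the terminal values of the continuous local martingale obtained by concatenating the pieces $t\mapsto\int_0^t\nu_jQ_j\,dM_j^H$, the strong law of large numbers for continuous local martingales gives: on $\{\eta_\infty=\infty\}$ one has $\xi_N/\eta_N\to0$, hence $\widehat\theta_N\to\theta$; whereas on $\{\eta_\infty<\infty\}$ the martingale $\xi_N$ converges a.s.\ to a finite $\xi_\infty$, and since the quadratic variation is strictly positive the conditional law of $\xi_\infty$ carries no atom at the origin, so $\widehat\theta_N\to\theta-\xi_\infty/\eta_\infty\ne\theta$. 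As $\{\eta_\infty=\infty\}$ is a tail event of the independent $\zeta_j$, Kolmogorov's zero--one law makes it have probability $0$ or $1$; consequently \eqref{eq:cond2} holds if and only if $\eta_\infty=\infty$ almost surely.

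It then remains to translate the almost-sure divergence of $\eta_\infty=\sum_j\zeta_j$ into \eqref{eq:cond1}. Since the $\zeta_j\ge0$ are independent, Kolmogorov's criterion gives $\sum_j\zeta_j=\infty$ a.s.\ iff $\sum_j\mathbb E[\zeta_j\wedge1]=\infty$, and the plan is to show this is equivalent to $\sum_j\mathbb E[\zeta_j]=\infty$ and finally to \eqref{eq:cond1}. Writing $\mathbb E[\zeta_j]=\nu_j^2\,\Phi(\mu_j(\theta))$ with $\Phi(\mu)=\mathbb E\int_0^TQ_j^2\,d\Kw_H$, I would establish the two ingredients forming the technical core: (i) the sharp asymptotics $\mu\,\Phi(\mu)\to c_H(T)>0$ as $\mu\to\infty$, so $\mathbb E[\zeta_j]\asymp\nu_j^2/\mu_j(\theta)$ for large $j$ and the series $\sum\mathbb E[\zeta_j]$ and $\sum_{j\ge J}\nu_j^2/\mu_j(\theta)$ converge together; and (ii) a bound $\mathrm{Var}(\zeta_j)\le K(\mathbb E[\zeta_j])^2$, so each $\zeta_j$ concentrates about its mean, which (splitting the indices by whether $\mathbb E[\zeta_j]$ stays bounded or diverges) upgrades the truncated Kolmogorov sum to the untruncated one.

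The main obstacle is precisely (i)--(ii): extracting the exact constant and the $1/\mu$ decay of $\Phi(\mu)$, together with the matching variance estimate, for a quadratic functional of the fractional Ornstein--Uhlenbeck process. Both are read off from the explicit Laplace transform of $\int_0^TQ_j^2\,d\Kw_H$ derived by Kleptsyna and Le Breton, and this is exactly where the restriction $H\ge1/2$ enters, since the closed form is available only in that range. For $H=1/2$ the functional reduces to $\int_0^Tu_j^2\,ds$ with $\Phi(\mu)\sim T/(2\mu)$ by an elementary computation, recovering the condition of Huebner and Rozovskii.
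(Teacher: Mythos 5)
Your proposal is correct and, at its core, follows the same architecture as the paper: the error representation $\widehat\theta_N-\theta=-\xi_N/\eta_N$ is exactly the paper's identity \eqref{conv1} (quoted from Kleptsyna--Le Breton), and your two ``technical core'' ingredients (i) and (ii) are precisely the paper's appendix Lemmas \ref{lmA1} and \ref{lmA2}, i.e.\ $\mu_j\,\mathbb{E}\int_0^TQ_j^2\,d\Kw_H\to T/2$ and $\mu_j^3\,\mathrm{Var}\big(\int_0^TQ_j^2\,d\Kw_H\big)\to T/2$, both read off the explicit Laplace transform; that computation (Bessel-function asymptotics, and the reason for $H\ge 1/2$) is the real work and you correctly defer it to the same source. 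Where you genuinely differ is the probabilistic packaging. The paper does not use the martingale SLLN, the three-series theorem, or the zero--one law: it normalizes numerator and denominator by the deterministic sums $A_N=\sum_{j\le N}\mathbb{E}\zeta_j$, applies Kolmogorov's SLLN for independent summands twice via the elementary bounds \eqref{conv000} and \eqref{conv001} to get $\xi_N/A_N\to0$ and $\eta_N/A_N\to1$, and in the convergent case simply notes that $\eta_\infty<\infty$ a.s.\ prevents the ratio from vanishing. Your route (Dambis--Dubins--Schwarz-flavored SLLN on $\{\eta_\infty=\infty\}$, then three-series plus Paley--Zygmund to identify $\{\eta_\infty=\infty\}$ with divergence of $\sum\mathbb{E}\zeta_j$) cleanly separates ``when the information diverges'' from ``what happens when it does,'' and needs the variance asymptotics only through the ratio $\mathrm{Var}(\zeta_j)/(\mathbb{E}\zeta_j)^2$ rather than the summability condition \eqref{conv001}. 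One soft spot: on $\{\eta_\infty<\infty\}$ you claim the law of $\xi_\infty$ has no atom at the origin; since the integrands $Q_j$ are driven by the same noises as the $M_j^H$, this is not immediate. It is also unnecessary: $\xi_\infty$ is the a.s.\ limit of independent centered summands with total variance $\sum_j\nu_j^2\,\mathbb{E}\int_0^TQ_j^2\,d\Kw_H>0$ (using \eqref{conv2}), so $\mathbb{P}(\xi_\infty\ne0)>0$, which already rules out almost-sure consistency. With that repair, your argument is complete modulo the appendix computations.
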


\begin{proof}
Following Kleptsyna and Le Brenton \cite[Equation (4.1)]{Klep1},
we conclude that
\begin{equation}
\label{conv1} \widehat{\theta}_N-\theta=-\frac{\sum_{j=1}^N
\int_0^T\nu_jQ_j(s)dM_j^H(s)}{\sum_{j=1}^N \int_0^T\nu_j^2Q_j^2(s)d\Kw_H(s)}.
\end{equation}
Both the top and the bottom on the right-hand side of
\eqref{conv1} are sums of independent random variables;
moreover, it is known from \cite[page 242]{Klep1} that
\begin{equation}
\label{conv2}
\mathbb{E}\left(\int_0^TQ_j(s)dM_j^H(s)\right)^2
=\mathbb{E}\int_0^TQ_j^2(s)d\Kw_H(s)ds.
\end{equation}
From the expression for the Laplace transform
of $\int_0^TQ_j^2(s)d\Kw_H(s)ds$ (see \cite[Equation (4.2)]{Klep1})
direct computations show that
\begin{equation}
\label{conv3}
\lim_{j\to \infty}\mu_j(\theta)\mathbb{E}\int_0^TQ_j^2(s)d\Kw_H(s)ds
=\frac{T}{2}>0
\end{equation}
and, with $\mathrm{Var}(\xi)$ denoting the variance of the
random variable $\xi$,
\begin{equation}
\label{conv4}
\lim_{j\to \infty}\mu_j^3(\theta)
\mathrm{Var}\left(\int_0^TQ_j^2(s)d\Kw_H(s)ds\right)=
\frac{T}{2}>0;
\end{equation}
a detailed derivation of \eqref{conv3} and \eqref{conv4} is given in the
appendix, Lemmas \ref{lmA1} and \ref{lmA2} respectively.

We now see that if \eqref{eq:cond1} does not hold, then,
by \eqref{conv3},
the series
$$
\sum_{j\geq 1}\int_0^T \nu_j^2Q_j^2(s)d\Kw_H(s)
$$
converges with probability one, which, by \eqref{conv1},
means that \eqref{eq:cond2} cannot hold.

On the other hand, if \eqref{eq:cond1} holds, then
\begin{equation}
\label{conv000}
\sum_{n\geq J}
\frac{\nu_n^2 \mu_n^{-1}}{\Big(\sum\limits_{j=1}^n \nu_j^2\mu_j^{-1} \Big)^2}
 <\infty.
\end{equation}
Indeed, setting $a_n=\nu_n^2 \mu_n^{-1}$ and $A_n=\sum_{j=1}^n a_j$,
we notice that
$$
\sum_{n\geq J} \frac{a_n}{A_n^2} \leq \sum_{n\geq J+1}
\left( \frac{1}{A_n}-\frac{1}{A_{n-1}}\right) = \frac{1}{A_{{}_J}}.
$$
Then the strong law of large numbers, together with the observation
$$
\mathbb{E}\int_0^TQ_j(s)dM_j^H(s)=0,\ j\geq 1,
$$
implies
$$
\lim\limits_{N\to\infty}
\frac{\sum_{j=1}^N \int_0^T\nu_jQ_j(s)dM_j(s)}
{\sum_{j=1}^N \mathbb{E}\int_0^T\nu_j^2Q_j^2(s)d\mathbf{w}_H(s)} = 0 \quad
\mathrm{with\ probability \ one}.
$$
Next, it follows from \eqref{conv000} and \eqref{eq:unbnd} that
\begin{equation}
\label{conv001}
\sum_{n\geq J}
\frac{\nu_n^4 \mu_n^{-3}}{\Big(\sum\limits_{j=J}^n \nu_j^2\mu_j^{-1} \Big)^2}
 <\infty.
\end{equation}
Then another application of the strong law of large numbers
implies that
\begin{equation}
\label{conv5}
\lim_{N\to \infty}
\frac{\sum_{j=1}^N
\int_0^T\nu_j^2Q_j^2(s)d\Kw_H(s)}{\sum_{j=1}^N
\mathbb{E}\sum_{j=1}^N \int_0^T\nu_j^2Q_j^2(s)d\Kw_H(s)}=1
\end{equation}
with probability one, and
 \eqref{eq:cond2} follows.
\end{proof}

\begin{corollary}
Under assumptions of Theorem \ref{th-exist}, if
\eqref{eq:cond1} holds, then
\begin{equation}
\label{normal}
\lim_{N\to \infty} \sqrt{\sum_{j=J}^N\frac{\nu_j^2}{\mu_j(\theta)}}
\ \Big(\widehat{\theta}_N-\theta\Big)=\zeta
\end{equation}
in distribution, where $\zeta$
is a Gaussian random variable with mean zero.
\end{corollary}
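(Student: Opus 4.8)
The plan is to write the estimation error as the ratio in \eqref{conv1} and to handle its numerator and denominator separately after a common rescaling. Set
$$
\Xi_N=\sum_{j=1}^N\int_0^T\nu_jQ_j(s)\,dM_j^H(s),\qquad
\Psi_N=\sum_{j=1}^N\int_0^T\nu_j^2Q_j^2(s)\,d\Kw_H(s),
$$
so that $\widehat{\theta}_N-\theta=-\Xi_N/\Psi_N$, and let $V_N=\mathbb{E}\Psi_N$ together with $B_N=\sum_{j=J}^N\nu_j^2/\mu_j(\theta)$. Since the summands are independent and mean zero, \eqref{conv2} gives the identity $\mathrm{Var}(\Xi_N)=\mathbb{E}\Psi_N=V_N$, while \eqref{conv3} yields $V_N\sim (T/2)B_N$ as $N\to\infty$ (the finitely many indices below $J$ are irrelevant). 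I would then factor
$$
\sqrt{B_N}\,\big(\widehat{\theta}_N-\theta\big)=-\sqrt{\tfrac{B_N}{V_N}}\cdot\frac{\Xi_N}{\sqrt{V_N}}\cdot\frac{V_N}{\Psi_N}
$$
and treat the three factors, the goal being to conclude by Slutsky's theorem.

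The two outer factors are already under control. The deterministic ratio satisfies $\sqrt{B_N/V_N}\to\sqrt{2/T}$ by the asymptotics above, and the last factor obeys $V_N/\Psi_N\to1$ almost surely — this is precisely \eqref{conv5}, established in the proof of Theorem \ref{th:conv}, whose verification rested on \eqref{conv4} and the summability \eqref{conv001}. Thus the whole problem reduces to a central limit theorem for the normalized numerator $\Xi_N/\sqrt{V_N}$.

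For the numerator I would invoke the Lindeberg--Feller theorem for the independent, mean-zero summands $X_j=\int_0^T\nu_jQ_j(s)\,dM_j^H(s)$, $1\le j\le N$, which by construction satisfy $\sum_{j\le N}\mathrm{Var}(X_j)=V_N$. The key structural observation is that, because $u_j(0)=0$, each $Q_j$ is a first-chaos (Gaussian) functional of $w_j^H$ and $M_j^H$ is the Gaussian martingale in \eqref{MQ}, so the It\^o integral $X_j$ belongs to the second Wiener chaos. Hypercontractivity on a fixed chaos then furnishes a uniform bound $\mathbb{E}X_j^4\le C\,(\mathbb{E}X_j^2)^2$, which reduces Lyapunov's condition (with exponent $4$) to showing
$$
\frac{1}{V_N^2}\sum_{j=1}^N\big(\mathbb{E}X_j^2\big)^2\longrightarrow 0 .
$$
By \eqref{conv3} one has $\mathbb{E}X_j^2\sim (T/2)\,\nu_j^2/\mu_j(\theta)$, so this is the assertion that $\sum_{j\le N}a_j^2\big/\big(\sum_{j\le N}a_j\big)^2\to0$ with $a_j=\nu_j^2/\mu_j(\theta)$.

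I expect this last negligibility statement to be the main obstacle, since it is genuinely stronger than what consistency required: Theorem \ref{th:conv} used only the divergence $B_N\to\infty$ (equivalently \eqref{eq:cond1}), whereas the CLT needs each term to be asymptotically negligible against the partial sums, i.e. $a_N/A_N\to0$ where $A_N=\sum_{j\le N}a_j$. Granting this, the elementary bound $a_j^2\le\varepsilon\,a_jA_N$ for large $j$ gives $\sum_{j\le N}a_j^2\le\varepsilon A_N^2+O(1)$ and hence the displayed ratio tends to $0$. The step I would spell out most carefully is therefore $a_N/A_N\to0$, which I expect to follow from the polynomial growth of the eigenvalues forced by $m$-parabolicity (Theorem \ref{th0}) together with \eqref{eq:unbnd}, exactly as it does in the elliptic Example \ref{ex:main}. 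With $\Xi_N/\sqrt{V_N}\Rightarrow\mathcal{N}(0,1)$ in hand, Slutsky's theorem yields $\sqrt{B_N}\,(\widehat{\theta}_N-\theta)\Rightarrow-\sqrt{2/T}\,\mathcal{N}(0,1)$, a mean-zero Gaussian (of variance $2/T$), which is the assertion \eqref{normal}.
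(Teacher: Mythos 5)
Your overall route is the same as the paper's: the paper's entire proof of this corollary is the single sentence that the claim ``follows from \eqref{conv1}, \eqref{conv5}, and the central limit theorem for the sum of independent random variables,'' and your factorization
$\sqrt{B_N}\,(\widehat{\theta}_N-\theta)=-\sqrt{B_N/V_N}\cdot(\Xi_N/\sqrt{V_N})\cdot(V_N/\Psi_N)$, together with $\mathrm{Var}(\Xi_N)=V_N$ from \eqref{conv2}, $V_N\sim (T/2)B_N$ from \eqref{conv3}, the almost sure convergence \eqref{conv5}, and Slutsky, is precisely that argument written out. Your identification of the summands as second-chaos variables and the use of hypercontractivity to reduce Lyapunov's condition to $\sum_{j\le N}a_j^2/A_N^2\to 0$ with $a_j=\nu_j^2/\mu_j(\theta)$ is a sensible and correct way to make the ``CLT for independent sums'' step precise.

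The genuine gap is exactly the step you flag, and your proposed resolution of it does not work: the negligibility $a_N/A_N\to 0$ is \emph{not} a consequence of the hypotheses of Theorem \ref{th-exist}. Those hypotheses ($m$-parabolicity via \eqref{eig1}--\eqref{eig2} plus the summability \eqref{eq:unbnd}) allow geometric growth of the eigenvalues: take $\rho_j=0$, $\nu_j=\lambda_j^{2m}=2^j$, $\Theta=(1/2,2)$; then \eqref{eig1}, \eqref{eig2} and \eqref{eq:unbnd} all hold, \eqref{eq:cond1} holds, yet $a_j=\nu_j^2/\mu_j=2^j/\theta$ gives $a_N/A_N\to 1/2$, so the Lindeberg--Feller/Lyapunov condition fails and a single summand carries a non-vanishing fraction of the variance. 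Polynomial growth of eigenvalues is a feature of the elliptic situation \eqref{asympt-order} and of Example \ref{ex:main}, not of the abstract setting in which the corollary is stated. To close the gap one must either impose a regularity assumption on $\nu_j^2/\mu_j(\theta)$ (which the paper implicitly has in mind), or replace the classical CLT by an argument that exploits the fact that each individual $X_j$ is itself asymptotically Gaussian as $\mu_j\to\infty$ --- for instance a fourth-cumulant estimate $\kappa_4(X_j)=o\big((\mathbb{E}X_j^2)^2\big)$ extracted from the explicit Laplace transform, in the spirit of Lemma \ref{lmA2}, after which $\sum_j\kappa_4(X_j)/V_N^2\to 0$ does hold even in the geometric example. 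The paper's own one-line proof is silent on all of this, so your write-up is, if anything, the more honest of the two; but as it stands the key analytic step is asserted rather than proved, and the justification you sketch for it is false in the stated generality.
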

\begin{proof}
This follows from \eqref{conv1},
\eqref{conv5}, and the central limit theorem for the
sum of independent random variables.
\end{proof}

 Let us now consider a more general equation
$$
du=(\mathcal{A}_0+\theta\mathcal{A}_1)udt+\mathcal{B}dW^H(t),
$$
where $\mathcal{B}$ is a linear operator.
 If $\mathcal{B}^{-1}$ exists, the equation reduced to
\eqref{eq-est00} by considering
$v=\mathcal{B}^{-1}u$.
If $\mathcal{B}^{-1}$ does not exist, we have two possibilities:
\begin{enumerate}
\item  $(u_0,h_i)_0=0$ for every $i$ such that $\mathcal{B}h_i=0$.
 In this case, $u_i(t)=0$ for all $t>0$, so that we can
 factor out the kernel of $\mathcal{B}$ and reduce the problem to
invertible $\mathcal{B}$.
\item  $(u_0,h_i)_0\not=0$ for some $i$ such that $\mathcal{B}h_i=0$.
In this case,  $u_i(t)=u_i(0)e^{-\rho_it-
\nu_i\theta t}$ and $\theta$ is determined exactly from the
observations of $u_i(t)$:
$$
\theta=\frac{1}{\nu_i(t-s)}\ln\frac{u_i(s)}{u_i(t)}-
\frac{\rho_i}{\nu_i},\ t\not=s.
$$
\end{enumerate}

Let  $\mathcal{A}_0$, $\mathcal{A}_1$ be differential or
pseudo-differential operators, either on a smooth bounded domain
in $\mathbb{R}^d$ or on a smooth compact $d$-dimensional manifold,
and let  $m_0,m_1$, be the orders of $\mathcal{A}_0$,
$\mathcal{A}_1$ respectively,
so that $2m=\max(m_0,m_1)$. Then, under rather general
conditions we have
\begin{equation}
\label{asympt-order}
\lim_{j\to \infty} |\nu_j|j^{m_1/d}=c_1,\ \
\lim_{j\to \infty} \mu_{j}(\theta)j^{2m/d}=c(\theta)
\end{equation}
for some positive numbers $c_1,c(\theta)$;
see, for example, Il'in \cite{Il'in}
 or  Safarov and Vassiliev \cite{SafVas}. In particular, this is the
case for the operators in equations \eqref{ex00} and
\eqref{eq:exmain}.

If \eqref{asympt-order} holds, then condition \eqref{eq:cond1} becomes
\begin{equation}
\label{order}
m_1\geq m-(d/2),
\end{equation}
which, in the case $H=1/2$, was established by
Huebner and Rozovskii \cite{HuebnerRozovskii}.
In particular, \eqref{order} holds for equation
\eqref{ex00} (where $2m=m_1=2$),
 and for equation \eqref{eq:exmain}
if $d\geq 2$ (where $2m=2$, $m_1=0$).

Note that, at least as long as $H\geq 1/2$, conditions
\eqref{eq:cond1} and \eqref{order} do not involve $H$.

The maximum likelihood estimator
\eqref{MLE} has three features that are clearly
attractive: consistency, asymptotic normality, and absence of stochastic
integration with respect to fractional Brownian motion. On the other
hand, actual implementation of \eqref{MLE} is problematic:
when $H>1/2$, computing the processes $Q_j$ and $Z_j$ is certainly nontrivial.
 Estimator \eqref{MLE1/2} is defined for
all $H\geq 1/2$ and contains only the processes $u_j$, but, when  $H>1/2$,
is not an MLE and is even harder
 to implement because of the stochastic integral with respect to $u_j$.

 With or without  condition \eqref{eq:cond1}, a consistent estimator
of $\theta$ is possible  in the large time asymptotic: for every
$j\geq 1$,
\begin{equation}
\label{LT-KB}
\lim_{T\to \infty}
\frac{ \int_0^T\nu_jQ_j(s)\big(dZ_j(s)
+\rho_jQ_j(s)d\Kw_H(s)\big)}
{ \int_0^T \nu_j^2Q_j^2(s)d\Kw_H(s)}=-\theta
\end{equation}
with probability one (\cite[Proposition 2.2]{Klep1}). For $H>1/2$,
implementation of this estimator is essentially equivalent to the
implementation of \eqref{MLE}.

An alternative to \eqref{LT-KB} was suggested by
Maslowski and Posp\'{\i}\v{s}il \cite{Maslowski.Pospisil08} using the ergodic
properties of the OU process.
Let us first illustrate the idea on  a simple example.

If $a>0$ and $w=w(t)$ is a standard one-dimensional Brownian motion,
then the OU process $dX=-aX(t)dt+dw(t)$ is ergodic and its unique
invariant distribution is normal with zero mean and variance $(2a)^{-1}$.
In particular,
\begin{equation}
\label{erg1}
\lim_{T\to \infty} \frac{1}{T}\int_0^T X^2(t)dt=\frac{1}{2a}
\end{equation}
with probability one, and so
\begin{equation}
\label{erg2}
\tilde{a}(T)=\frac{T}{2\int_0^T X^2(t)dt}
\end{equation}
is a consistent estimator of $a$ in the long-time asymptotic. Note that
the maximum likelihood estimator in this case is
\begin{equation}
\label{erg3}
\hat{a}(T)=-\frac{\int_0^TX(t)dX(t)}{\int_0^TX^2(s)ds}
\end{equation}
and is strongly consistent for every $a\in \mathbb{R}$ \cite[Theorem 17.4]{LSh-II}.

Similarly, if $a>0$, then  the fractional OU process
\begin{equation}
\label{erg4}
dX(t)=-aX(t)dt+dw^H(t),\ X(0)=0
\end{equation}
is Gaussian, and, by \eqref{gamma} on page \pageref{gamma},
 converges in distribution, as $t\to \infty$, to the Gaussian random
 variable with zero mean and variance $c(H)a^{-2H}$, where
 \begin{equation}
 \label{erg5}
c(H)=H(2H-1)\Gamma(2H-1);
\end{equation}
notice that, in the limit $H\searrow 1/2$, we recover the result for the
usual OU process.
Further investigation shows that, similar to \eqref{erg1},
$$
\lim_{T\to \infty} \frac{1}{T}\int_0^T X^2(s)ds= \frac{c(H)}{a^{2H}}
$$
(see \cite{Maslowski.Pospisil08}).
As a result, for every $j$ such that $\theta \nu_j+\rho_j>0$, we have
\begin{equation}
\label{erg6}
\lim_{T\to \infty} \frac{1}{T}\int_0^Tu_j^2(t)dt=\frac{c(H)}{(\theta\nu_j+\rho_j)^{2H}}
\end{equation}
with probability one. Under an additional assumption that $\nu_j\not=0$, we get an
estimator of $\theta$
\begin{equation}
\label{erg7}
\tilde{\theta}^{(j)}(T)=
\frac{1}{\nu_j}\left(\frac{c(H)T}{\int_0^Tu_j^2(t)dt}\right)^{\frac{1}{2H}}-
\frac{\rho_j}{\nu_j}.
\end{equation}
This estimator is strongly consistent in the long time asymptotic:
$\lim_{T\to \infty}|\tilde{\theta}^{(j)}(T)-\theta|=0$ with probability one
  (\cite[Theorem 5.2]{Maslowski.Pospisil08}).
While not a maximum likelihood estimator, \eqref{erg7} is easier to
implement computationally than \eqref{MLE}. If, in Theorem \ref{th-exist}
 on page \pageref{th-exist}, we have  $\mathcal{A}_0=0$, $\nu_j>0$, and
 $\gamma<2H$, then
a version of \eqref{erg8} exists using all the Fourier coefficients
$u_j,\ j\geq 1:$
\begin{equation}
\label{erg8}
\tilde{\theta}(T)=
\left(\frac{c(H)T\sum_{j= 1}^{\infty} \nu_j^{-2H}}{\sum_{j= 1}^{\infty}
\int_0^Tu_j^2(t)dt}\right)^{\frac{1}{2H}};
\end{equation}
see \cite[Theorem 5.2]{Maslowski.Pospisil08}.

An interesting open question related to both \eqref{MLE} and \eqref{erg7},
\eqref{erg8} is how to combine estimation of $\theta$ with estimation of $H$.

\def\cprime{$'$} \def\cprime{$'$} \def\cprime{$'$} \def\cprime{$'$}

%\bibliographystyle{plain}

%\bibliography{igorbibliography,fbm}
%This is to use multiple data bases.

\section*{Appendix}
\renewcommand{\theequation}{A.\arabic{equation}}
\setcounter{equation}{0}

\renewcommand{\thetheorem}{A.\arabic{theorem}}
\setcounter{theorem}{0}

Below, we prove equalities \eqref{conv3} and \eqref{conv4}.
\begin{lemma}
\label{lmA1}
For every $\theta\in\Theta$ and $H\in[1/2,1),$
$$
\lim\limits_{j\to\infty} \mu_j(\theta) \mathbb{E}\int_0^T Q_j^2(s)d\mathbf{w}_H (s)
= \frac{T}{2} \, .
$$
\end{lemma}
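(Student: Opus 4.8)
The plan is to follow Kleptsyna and Le Breton and work not with $Q_j$ directly but with the Laplace transform of the scalar random variable $Y_j:=\int_0^T Q_j^2(s)\,d\mathbf{w}_H(s)$. Writing $a:=\mu_j(\theta)$, the parabolicity of \eqref{eq2} together with \eqref{eq:unbnd} forces $|\mu_j|\to\infty$ with $\mu_j>0$ for all large $j$, so the assertion reduces to showing $a\,\mathbb{E}Y_j\to T/2$ as $a\to+\infty$. Set $L_j(\alpha)=\mathbb{E}\exp(-\alpha Y_j)$. The identity \cite[Equation (4.2)]{Klep1} expresses $L_j(\alpha)$ in closed form as a function of $\alpha$, $a$ and $T$ (built from the fundamental martingale $M_j^H$, whose bracket is $\mathbf{w}_H$), with $L_j(0)=1$; since $Y_j\geq0$, one differentiation at the origin recovers the mean, $\mathbb{E}Y_j=-L_j'(0)$. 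After this step the problem is entirely deterministic: analyze $-a\,L_j'(0)$ as $a\to+\infty$.

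The case $H=1/2$ is worth keeping in view as a fully computable guide. There $k_H\equiv1$, $\mathbf{w}_H(s)=s$ and $Q_j=u_j$, so $u_j$ is the classical Ornstein--Uhlenbeck process with $\mathbb{E}u_j^2(s)=(1-e^{-2as})/(2a)$, whence
$$
a\,\mathbb{E}\int_0^T u_j^2(s)\,ds=\frac12\Big(T-\frac{1-e^{-2aT}}{2a}\Big)\longrightarrow\frac{T}{2}\qquad(a\to\infty).
$$
Equivalently $L_j(\alpha)=e^{aT/2}\big(\cosh(\gamma T)+\tfrac{a}{\gamma}\sinh(\gamma T)\big)^{-1/2}$ with $\gamma=\sqrt{a^2+2\alpha}$, and $-L_j'(0)$ returns the same limit. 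For general $H\in[1/2,1)$ the formula of \cite{Klep1} is the fractional counterpart of this hyperbolic expression, and I would compute $-L_j'(0)$ and its large-$a$ limit along exactly the same lines, with the elementary $\cosh$ and $\sinh$ replaced by the special functions occurring there.

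The main obstacle is thus analytic rather than probabilistic: extracting the $a\to+\infty$ asymptotics of the fractional Laplace transform and justifying that the $\alpha$-derivative may be interchanged with that limit. The value $T/2$ is best understood at the level of the integrand. One expects, and the $H=1/2$ computation exhibits transparently, the pointwise relation
$$
a\,\mathbb{E}Q_j^2(s)\,\mathbf{w}_H'(s)\longrightarrow\frac12\qquad(a\to\infty),\ s\in(0,T],
$$
i.e. $\mathbb{E}Q_j^2(s)\sim (2a\,\mathbf{w}_H'(s))^{-1}$; the weight $\mathbf{w}_H'(s)$ exactly absorbs the $s$-dependence, so that $a\,\mathbb{E}Y_j=\int_0^T a\,\mathbb{E}Q_j^2(s)\,\mathbf{w}_H'(s)\,ds\to\int_0^T \tfrac12\,ds=T/2$ once dominated convergence is secured by an integrable bound uniform in large $a$. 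It is precisely this $\mu_j^{-1}$ scaling of $Q_j$ --- as opposed to the $\mathbb{E}u_j^2(s)\sim H(2H-1)\Gamma(2H-1)\,|\mu_j|^{-2H}$ scaling of the solution itself from \eqref{gamma} --- that reflects the martingale structure underlying $M_j^H$ and makes the limit clean, linear in $T$, and independent of $H$.
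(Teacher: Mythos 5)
Your overall strategy coincides with the paper's: both start from the closed-form Laplace transform $\Psi_T^H(a,\mu_j)$ of $\int_0^T Q_j^2(s)\,d\mathbf{w}_H(s)$ in Kleptsyna--Le Breton, recover the mean as $-\partial_a\Psi_T^H\big|_{a=0}$, and then let $\mu_j\to\infty$. The gap is that you never actually carry this program out for $H\in(1/2,1)$, which is the entire content of the lemma. The only case you compute is $H=1/2$, where the transform is hyperbolic; for $H>1/2$ the formula involves the product $I_{-H}\!\left(\tfrac{\alpha T}{2}\right)I_{H-1}\!\left(\tfrac{\alpha T}{2}\right)$ of modified Bessel functions (multiplied by $\alpha^2-\mu_j^2=2a$, so it survives differentiation at $a=0$), and the differentiation followed by the large-$\mu_j$ asymptotics is exactly where the work lies. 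The paper performs the differentiation explicitly, obtaining a term $-\mu_j\pi T\,I_{H-1}\!\left(\tfrac{\mu_j T}{2}\right)I_{-H}\!\left(\tfrac{\mu_j T}{2}\right)\csc(H\pi)$ in the numerator, and then invokes $I_p(x)\sim e^x/\sqrt{2\pi x}$ to see that this term is of order $e^{\mu_j T}$, hence negligible against the leading term $-2\mu_j T e^{\mu_j T}$ that produces $-T/(2\mu_j)$. Asserting that you ``would compute along the same lines'' does not establish that the Bessel contribution is lower order; a priori it need not be, and checking this is the proof.

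Your alternative route via the pointwise claim $a\,\mathbb{E}Q_j^2(s)\,\mathbf{w}_H'(s)\to \tfrac12$ is likewise only a heuristic. It is consistent with the answer --- it is what the isometry \eqref{conv2} suggests ``on average'' --- but you give no derivation of the asymptotics of $\mathbb{E}Q_j^2(s)$ for $H>1/2$ (note that $Q_j$ is a nontrivial weighted transform of $u_j$, so its second moment cannot be read off from \eqref{gamma}), and you explicitly defer the dominated-convergence bound that would be needed to integrate the pointwise limit. Both branches of your argument therefore stop precisely at the step that constitutes the lemma.
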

\begin{proof}
Denote by $\Psi_T^H(a,\mu_j)$ the Laplace transform of
$\int_0^TQ_j^2(s)d\mathbf{w}_H(s)$,
namely
\begin{equation}
\label{KB-Lap}
\Psi_T^H(a,\mu_j(\theta)) = \mathbb{E}
\exp\left\{ -a  \int_0^TQ_j^2(s)d\mathbf{w}_H(s) \right\}, \quad a>0.
\end{equation}
We will use the expression for  $\Psi_T^H$ from \cite[page 242]{Klep1},
 and write it as follows
$$
\Psi_T^H(a, \mu_j ) = \alpha e^{ \frac{ (\mu_j - \alpha)T}{2}}
\left[ \Delta_T^H(\mu_j,\alpha) \right]^{-\frac{1}{2}}
$$
where $\mu_j=\mu_j(\theta), \ \alpha:=\sqrt{\mu_j^2 + 2a}$,
\begin{align*}
\Delta_T^H(\mu_j,\alpha) &
= \frac{\pi\alpha T e^{-\alpha T} (\alpha^2-\mu_j^2)}{4\sin(\pi H)}
I_{-H}\left(\frac{\alpha T}{2}\right)I_{H-1}\left( \frac{\alpha T}{2}\right) \\
    &+ e^{-\alpha T}\left[ \alpha \sinh \left(\frac{\alpha T}{2}\right)
    + \mu_j \cosh\left(\frac{\alpha T}{2}\right)  \right]^2 \ ,
\end{align*}
and $I_p$ is the modified Bessel function of the first kind and order $p$.

Note that
$$
\mathbb{E}\int_0^T Q_j^2(s)d\mathbf{w}_H (s)
= -\frac{\partial \Psi_T^H(a,\mu_j)}{\partial a}\Big{|}_{a=0} \ .
$$
Direct evaluations (for example, using {\tt Mathematica} computer algebra
system) give
$$
 \frac{\partial \Psi_T^H(a,\mu_j)}{\partial a} \Big{|}_{a=0}=
 \frac{ 2+2e^{\mu_j T}(1-\mu_j T) - \mu_j\pi T I_{H-1}\left(\frac{\mu_j T}{2}\right)
I_{-H}\left( \frac{\mu_j T}{2}\right) \csc(H\pi)}{4\mu_j^{2} e^{\mu_j T}},
$$
where $\csc(x)=1/\sin(x)$.
%and using the identity
%$$
%I_{1-H}(x ) \, I_H(x) -
%I_{-H}(x)\, I_{H-1}(x)
% = \frac{2\sin(\pi H)}{\pi x}
%$$
%%(see for instance [KB], Remark 5.1, page 247),
%we continue
%$$
%\frac{\partial \Psi_T^H(0,\mu_j)}{\partial a} =
%\frac{-2 + 2e^{\mu_j T} - \frac{\mu_j \pi T}{\sin(H\pi)}I_{H}I_{1-H}
% + 2\mu_j Te^{\mu_j T} }
%{4 \mu_j^2 e^{\mu_j T}} \ .
%$$
By combining formulas (6.106), (6.155), and (6.162) in \cite{Andrews},
we conclude that, for all $p\in (-1,1),\ p\not=0,$ we have
$I_p(x)\sim e^x/\sqrt{2\pi x}$, $x\to \infty$, that is,
\begin{equation}
\label{A-B}
\lim_{x\to +\infty} \sqrt{2\pi x}\ e^{-x}I_p(x)=1.
\end{equation}
Therefore
$$
\frac{\partial \Psi_T^H(a,\mu_j)}{\partial a}\Big|_{a=0} \sim
\frac{ 2+2e^{\mu_j T}(1-\mu_j T) - e^{\mu_j T} \csc(H\pi)}{4\mu_j^{2} e^{\mu_j T}}
\sim - \frac{T}{2\mu_j}, \ j\to \infty,
$$
$$
\lim_{j\to\infty} \mu_j \frac{\partial \Psi_T^H(a,\mu_j)}{\partial a} \Big|_{a=0}= -\frac{T}{2},
$$
and the lemma is proved.
\end{proof}

\begin{lemma}
\label{lmA2}
For every $\theta\in\Theta$ and $H\in[1/2,1)$
$$
\lim\limits_{j\to\infty} \mu_j^3(\theta) \mathrm{Var}
\left(\int_0^T Q_j^2(s)d\mathbf{w}_H (s) \right)= \frac{T}{2} \, .
$$
\end{lemma}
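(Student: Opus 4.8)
The plan is to follow the proof of Lemma \ref{lmA1} one order further, extracting the variance from the second derivative of the Laplace transform $\Psi_T^H(a,\mu_j)$ at $a=0$. Writing $X_j=\int_0^T Q_j^2(s)\,d\mathbf{w}_H(s)$ and recalling that the derivatives of the Laplace transform encode the moments, $\mathbb{E}X_j=-\partial_a\Psi_T^H|_{a=0}$ and $\mathbb{E}X_j^2=\partial_a^2\Psi_T^H|_{a=0}$, we have
$$
\mathrm{Var}(X_j)=\frac{\partial^2\Psi_T^H(a,\mu_j)}{\partial a^2}\Big|_{a=0}
-\left(\frac{\partial\Psi_T^H(a,\mu_j)}{\partial a}\Big|_{a=0}\right)^2.
$$
The first-derivative term is already in hand from Lemma \ref{lmA1}. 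First I would compute $\partial_a^2\Psi_T^H|_{a=0}$ in closed form by the same symbolic differentiation used there; since $a$ enters only through $\alpha=\sqrt{\mu_j^2+2a}$, the result is an explicit expression in $\mu_j$, $T$, $H$ and the modified Bessel functions $I_{-H}(\mu_j T/2)$, $I_{H-1}(\mu_j T/2)$, together with the neighbouring orders $I_H$ and $I_{-H-1}$ produced by the recurrence $I_p'(x)=\tfrac12\bigl(I_{p-1}(x)+I_{p+1}(x)\bigr)$.

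Next I would pass to the limit $\mu_j\to\infty$, which is guaranteed by \eqref{eq:unbnd}. The subtle point is that both $\mathbb{E}X_j^2$ and $(\mathbb{E}X_j)^2$ are of order $\mu_j^{-2}$ — indeed, by Lemma \ref{lmA1}, $(\mathbb{E}X_j)^2\sim T^2/(4\mu_j^2)$ — whereas the variance we are after is of order $\mu_j^{-3}$. Thus the leading $\mu_j^{-2}$ contributions must cancel exactly, and the answer lives in the next order. The crude Bessel asymptotic $I_p(x)\sim e^x/\sqrt{2\pi x}$ of \eqref{A-B}, which sufficed for Lemma \ref{lmA1}, is therefore not enough here; I would instead use the refined expansion
$$
\sqrt{2\pi x}\,e^{-x}I_p(x)=1-\frac{4p^2-1}{8x}+O\!\left(\frac{1}{x^2}\right),
\qquad x\to\infty,
$$
retaining the $O(1/x)$ correction (with $x=\mu_j T/2$) in every Bessel factor of both the first- and the second-derivative expressions.

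Substituting these expansions into $\mathrm{Var}(X_j)$ and collecting powers of $\mu_j$, the $\mu_j^{-2}$ terms cancel — this is precisely the arithmetic statement that subtracting $(\mathbb{E}X_j)^2$ removes the square of the mean — and the surviving leading term is $T/(2\mu_j^3)$. Multiplying by $\mu_j^3$ and letting $j\to\infty$ then yields the claimed limit $T/2$.

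The hard part, and the only place genuine work is needed beyond Lemma \ref{lmA1}, is controlling this cancellation. One must carry the first-order Bessel correction through both $\partial_a^2\Psi_T^H|_{a=0}$ and the first-derivative expression of Lemma \ref{lmA1} — the latter because the $\mu_j^{-2}$ coefficient of $\mathbb{E}X_j$ feeds into $(\mathbb{E}X_j)^2$ at order $\mu_j^{-3}$ — and then verify that the $\mu_j^{-2}$ coefficient of the variance vanishes identically in $H$ and $T$ before reading off the $\mu_j^{-3}$ coefficient. A symbolic computation is the cleanest way to organize this bookkeeping, exactly as in Lemma \ref{lmA1} but expanded to one further order; the decisive check is that the $H$-dependent corrections coming from the Bessel indices $H-1$ and $-H$ conspire to cancel, leaving the $H$-independent value $T/2$.
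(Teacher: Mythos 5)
Your proposal is correct and follows essentially the same route as the paper: the variance is read off from $\partial_a^2\Psi_T^H\big|_{a=0}-\big(\partial_a\Psi_T^H\big|_{a=0}\big)^2$, evaluated symbolically in closed form and then expanded using the large-argument asymptotics of the modified Bessel functions. The only difference is organizational: the paper carries out the subtraction exactly first, after which every surviving term in $\mathbf{V}$ is already $O(\mu_j^{-3})$ and the leading-order asymptotic \eqref{A-B} suffices (the $\csc(\pi H)$-dependent contributions at order $\mu_j^{-3}$ cancel among themselves), so the refined expansion $1-(4p^2-1)/(8x)+O(x^{-2})$ that you propose to track is a safe but, in the paper's arrangement, unnecessary precaution — it becomes genuinely necessary only if one expands the two derivative terms separately before subtracting, as you describe.
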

\begin{proof}
Note that
\begin{equation}
\label{App-Var}
\mathbf{V}:=\mathrm{Var}\left(\int_0^T Q_j^2(s)d\mathbf{w}_H (s)\right) =
\left[\frac{\partial^2 \Psi_T^H(a,\mu_j)}{\partial a^2} -
\left(\frac{\partial \Psi_T^H(a,\mu_j)}{\partial a}\right)^2\right]_{a=0},
\end{equation}
with $\Psi^H_T$ from \eqref{KB-Lap}.
Direct evaluation of the right hand side of \eqref{App-Var}
(for example, using {\tt Mathematica} computer algebra system) gives
\begin{align*}
\mathbf{V}= \frac{1}{8\mu_j^4 e^{2T\mu_j}}
&\Big( 2-8e^{\mu_j T} (1+\mu_j T) + 2 e^{2\mu_j T}(-5+2\mu_j T) \\
& + \pi \mu_j T \csc(\pi H) \big[ -2e^{\mu_j T} \mu_j T I_{1-H}
\left(\frac{\mu_j T}{2}\right)
 I_{H-1}\left(\frac{\mu_j T}{2}\right)\\
&+ I_{-H}\left(\frac{\mu_j T}{2}\right)
\{ 4(-1+e^{\mu_j T}(1+\mu_j T))I_{H-1} \left(\frac{\mu_j T}{2}\right)\\
&-  2 e^{\mu_j T} \mu_j T I_{H}\left(\frac{\mu_j T}{2}\right)
 +\pi\mu_j T I_{H-1}^2\left(\frac{\mu_j T}{2}\right)
  I_{-H}\left(\frac{\mu_j T}{2}\right) \csc(H\pi) \} \big] \Big),
\end{align*}
where $\csc(x)=1/\sin(x)$
and $I_p$ is the modified Bessel function of the first kind and order $p$.

Using \eqref{A-B}, we conclude that
\begin{align*}
\lim\limits_{j\to\infty} \mu_j^3(\theta)\mathbf{V}
= & \lim\limits_{j\to\infty} \mu_j^3
\Big( \frac{-10 + 4\csc(H\pi) + \csc^2(H\pi)}{8\mu_j^4}
+ \frac{1}{4\mu_j^4 e^{2\mu_j T}} \\
&- \frac{\csc(H\pi) + 2 +2 \mu_j T}{2\mu_j^4 e^{\mu_j T}}
+ \frac{T}{2\mu_j^3}  \Big) \\
= & \frac{T}{2}
\end{align*}
and complete the proof of the lemma.
\end{proof}

\end{document}